\newcolumntype{^}{>{\currentrowstyle}}
\journal{ }
\newtheorem{definition}{Definition}
\newtheorem{theorem}{Theorem}
\newtheorem{question}{Question}
\begin{document}
\renewcommand{\abstractname}{Abstract}
\renewcommand{\refname}{References}
\renewcommand{\tablename}{Table.}
\renewcommand{\arraystretch}{0.9}
\sloppy

\begin{frontmatter}
\title{A new construction of strongly regular graphs with parameters of the complement symplectic graph}
\author[01]{Vladislav~V.~Kabanov}
\ead{vvk@imm.uran.ru}

\address[01]{Krasovskii Institute of Mathematics and Mechanics, S. Kovalevskaja st. 16, Yekaterinburg, 620990, Russia}

\begin{abstract}
The symplectic graph $Sp(2d, q)$ is the collinearity graph of the symplectic space of dimension $2d$ over a finite field of order~$q$. 
A $k$-regular graph on $v$ vertices is a divisible design graph  with parameters $(v,k,\lambda_1 ,\lambda_2 ,m,n)$ if its vertex set can be partitioned into $m$ classes of size $n$, such that any two different vertices from the same class have  $\lambda_1$ common neighbours, and any two vertices from different classes have  $\lambda_2$ common neighbours whenever it is not complete or edgeless.  
In this paper we propose a new construction of strongly regular graphs with the parameters of the complement of the symplectic graph using divisible design graphs. 
\end{abstract}

\begin{keyword} Strongly regular graph\sep  Divisible design graph

\vspace{\baselineskip}
\MSC[2020] 05C50\sep 05E30
\end{keyword}
\end{frontmatter}

\section{Introduction}

\begin{definition}
The {\em symplectic graph} $Sp(2d, q)$ is the collinearity graph of the symplectic space  of dimension $2d$ over a finite field of order~$q$. 
\end{definition}

For $d=1$ the symplectic graph is a $(q+1)$-coclique. For $d\geq 2$ this graph is strongly regular, with parameters
$$\left(\frac{q^{2d}-1}{q-1},\, \frac{q(q^{2d-2}-1)}{q-1},\, \frac{q^2(q^{2d-4}-1)}{q-1}+q-1,\, \frac{q^{2d-2}-1}{q-1}\right).$$ See, for example, \cite[Chapter 2]{BW} for definitions and properties of finite polar spaces and their collinearity graphs.

In 2016, A.~Abiad and W.H.~Haemers~\cite{AH} used Godsil-McKay switching to obtain strongly regular graphs with the same parameters as $Sp(2d,2)$ for all $d\geq 3$. In 2017, F.~Ihringer~\cite{FI} provided a new general construction of strongly regular graphs from the collinearity graph of a polar spaces of rank at least $3$ over a finite field of order~$q$. 
Recently A.E.~Brouwer, F.~Ihringer and W.M.~Kantor~\cite{BIK} described a switching operation on the collinearity graphs of polar spaces to obtain new strongly regular graphs which satisfy the so-called 4-vertex condition if the original graph comes from a symplectic polar space.

The article is organized as follows.
One type of divisible design graphs from~\cite{VK} is presented in Section~\ref{S1}.
In Section~\ref{S2} we discuss a regular decomposition of strongly regular graph.
We propose a new construction of strongly regular graphs with parameters
$$\left(\frac{q^{2d}-1}{q-1},\, q^{2d-1},\, q^{2d-2}(q-1),\, q^{2d-2}(q-1)\right)$$
 which are the parameters of the complement of~$Sp(2d, q)$ in Section~\ref{S3}. 
 Another construction from Hoffman coloring of strongly regular graphs is given in Section~\ref{S4}. 
 At the end, we discussed small examples, some open questions and the number of isomorphism classes of the graphs from our constructions.

\section{Construction of divisible design graphs}\label{S1}

\begin{definition} 
A $k$-regular graph on $v$ vertices is a {\em divisible design graph} with parameters $(v,k,\lambda_1 ,\lambda_2 ,m,n)$ if its vertex set can be partitioned into $m$ classes of size $n$, such that any two different vertices from the same class have  $\lambda_1$ common neighbours, and any two vertices from different classes have  $\lambda_2$ common neighbours  whenever it is not complete or edgeless. 
\end{definition}
The partition of a divisible design graph into  classes is called  a {\em canonical partition}.

Divisible design graphs were first introduced by W.H.~Haemers, H.~Kharaghani and M.~Meulenberg in~\cite{HKM}. 

W.D.~Wallis~\cite{WW}, D.G.~Fon-Der-Flaass~\cite{FF}, and  M.~Muzychuk~\cite{MM} proposed  a new construction of strongly regular graphs based on affine designs.
Similar idea is used in Construction~DDG  to obtain divisible design graphs.

Any $d$-dimensional affine space over a finite field of order $q$ is a point-hyperplane affine design with $q^d$ points. Any block contains $q^{d-1}$ points,  $q^{d-2}$ points are in the intersection of any two different blocks, and the number of blocks containing any two different points is $(q^{d-1}-1)/(q-1)$. 

\begin{definition}
A set $\mathcal{Q}$ equipped with a binary operation $\circ$ is called a {\em left quasigroup} if for all elements $i$ and $j$ in 
$\mathcal{Q}$ there is a unique element $h$ such that $i \circ h = j$. 
In other words, the mapping $h\mapsto i \circ h$ is a bijection of $\mathcal{Q}$ for any $i\in \mathcal{Q}$. 
\end{definition}
 
This section presents a construction of divisible design graphs. This construction first appeared in~\cite[Construction 1]{VK}. For the construction we use a left quasigroup and affine designs.
\smallskip

Let $\mathcal{D}_1, \dots ,\mathcal{D}_m$ be arbitrary affine designs all with parameters $(q,q^{d-2})$, where $m=(q^d - 1)/(q-1)$ is the number of parallel classes of blocks in each $\mathcal{D}_i$. For all  $i\in [m]:=\{1,2,\dots , m\}$, let $\mathcal{D}_i=(\mathcal{P}_i, \mathcal{B}_i)$. Parallel classes in each $\mathcal{D}_i$ are enumerated by integers from $[m]$ and
 $j$-th parallel class of $\mathcal{D}_i$ is denoted  by $\mathcal{B}_i^j$.
For any $x\in \mathcal{P}_i$, the block in the parallel class $\mathcal{B}_i^j$ which contains $x$ is denoted by $B_i^j(x)$.
\smallskip

Let $\mathcal{Q}$ be a left quasigroup on $[m]$ equipped with a binary operation $\circ$.
\smallskip

For every pair $i, j$ choose an arbitrary bijection 
$$\sigma_{i,j} : \mathcal{B}_i^{i\circ j} \rightarrow \mathcal{B}_j^{j\circ i}.$$ 

We require that $\sigma_{i,j}=\sigma_{j,i}^{-1}$ for $i\neq j$ and $\sigma_{i,i}$ is identity for all $i,j\in [m]$.
\medskip

{\bf Construction DDG.}

Let $\Gamma$ be a graph defined as follows:
\begin{itemize}
    \item The vertex set of $\Gamma$ is  $\displaystyle V=\bigcup_{i=1}^{m} \mathcal{P}_i.$ 
    \item Two different vertices $x\in \mathcal{P}_i$ and $y\in \mathcal{P}_j$ are adjacent in $\Gamma$ if and only if $$y \notin \sigma_{ij}(B_i^{i\circ j}(x))\quad \mathrm{for\, all} \quad i,j\in [m].$$ 
\end{itemize}
\medskip

\begin{theorem}\label{Th1}  
If $\Gamma$ is a graph from Construction~DDG, then  $\Gamma$ is a divisible design graph with parameters  
$$v = q^d (q^d - 1)/(q-1),\quad k = q^{d-1}(q^d - 1),$$
$$\lambda_1 = q^{d-1}(q^d - q^{d-1} - 1),\quad \lambda_2 = q^{d-2}(q-1)(q^d - 1),$$ $$m = (q^d - 1)/(q-1),\quad n = q^d.$$
\end{theorem}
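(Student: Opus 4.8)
\medskip\noindent{\em Proof strategy.~}
The plan is to describe the neighbourhoods of $\Gamma$ one canonical class at a time. Fix $x\in\mathcal{P}_i$ and a class $\mathcal{P}_l$, and adopt the convention $\sigma_{i,i}=\mathrm{id}$. I claim that a vertex $z\in\mathcal{P}_l$ satisfies $z\not\sim x$ or $z=x$ if and only if $z$ lies in the single block
$$\beta_x^{(l)}:=\sigma_{i,l}\big(B_i^{i\circ l}(x)\big),$$
which is one of the $q$ blocks of the parallel class $\mathcal{B}_l^{l\circ i}$ of $\mathcal{D}_l$; for $l=i$ this reads $\beta_x^{(i)}=B_i^{i\circ i}(x)$, the block through $x$ in the class $\mathcal{B}_i^{i\circ i}$. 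The first thing to verify is that the adjacency relation is symmetric, and this is precisely where the hypotheses $\sigma_{i,j}=\sigma_{j,i}^{-1}$ and $\sigma_{i,i}=\mathrm{id}$ are needed: if $z\in\beta_x^{(l)}$ then $\beta_x^{(l)}$ is the block of $\mathcal{B}_l^{l\circ i}$ containing $z$, i.e.\ $\beta_x^{(l)}=B_l^{l\circ i}(z)$, and applying $\sigma_{l,i}=\sigma_{i,l}^{-1}$ returns $B_i^{i\circ l}(x)$, so $x\in\beta_z^{(i)}$. By the properties of affine designs recalled above, each $\mathcal{P}_i$ has $q^d$ points and each block has $q^{d-1}$ points, so $v=m q^d=q^d(q^d-1)/(q-1)$; and since $|\beta_x^{(l)}|=q^{d-1}$ for every $l$, the vertex $x$ has $q^d-q^{d-1}$ neighbours in each of the $m$ classes, whence $k=m\,q^{d-1}(q-1)=q^{d-1}(q^d-1)$.

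Next I would count the common neighbours of a pair $x,y$ inside each class $\mathcal{P}_l$ and sum over $l\in[m]$. In $\mathcal{P}_l$ the non-neighbours of $x$ are exactly the points of $\beta_x^{(l)}$ and those of $y$ the points of $\beta_y^{(l)}$; moreover, when $x\in\mathcal{P}_l$ it lies in $\beta_x^{(l)}$ (and likewise for $y$), so $x$ and $y$ are automatically excluded from the count. Hence the number of common neighbours of $x$ and $y$ in $\mathcal{P}_l$ is
$$q^d-\big|\beta_x^{(l)}\cup\beta_y^{(l)}\big|=q^d-2q^{d-1}+\big|\beta_x^{(l)}\cap\beta_y^{(l)}\big|,$$
which equals $q^d-q^{d-1}$ if the two blocks coincide, $q^d-2q^{d-1}$ if they are distinct but parallel, and $q^d-2q^{d-1}+q^{d-2}=q^{d-2}(q-1)^2$ if they are non-parallel, the last case using the affine-design intersection number $q^{d-2}$.

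Everything then reduces to deciding, for each $l$, which of the three cases occurs, and this is where the left-quasigroup axiom is used, in two different ways. First suppose $x,y\in\mathcal{P}_i$ are distinct. Then $\beta_x^{(l)},\beta_y^{(l)}\in\mathcal{B}_l^{l\circ i}$ always lie in one common parallel class, so the non-parallel case never occurs; and since $\sigma_{i,l}$ is a bijection of parallel classes, $\beta_x^{(l)}=\beta_y^{(l)}$ if and only if $B_i^{i\circ l}(x)=B_i^{i\circ l}(y)$, i.e.\ $x$ and $y$ share a block in the class $\mathcal{B}_i^{i\circ l}$ of $\mathcal{D}_i$. As $l$ runs over $[m]$ the superscript $i\circ l$ runs over all of $[m]$, because left multiplication by $i$ is a bijection of $\mathcal{Q}$; hence the number of $l$ with $\beta_x^{(l)}=\beta_y^{(l)}$ equals the number of blocks of $\mathcal{D}_i$ through $x$ and $y$, namely $(q^{d-1}-1)/(q-1)$, and the remaining $m-(q^{d-1}-1)/(q-1)=q^{d-1}$ values of $l$ give two distinct parallel blocks. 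Summing yields
$$\lambda_1=\frac{q^{d-1}-1}{q-1}\,q^{d-1}(q-1)+q^{d-1}\cdot q^{d-1}(q-2)=q^{d-1}(q^d-q^{d-1}-1).$$
Now suppose $x\in\mathcal{P}_i$ and $y\in\mathcal{P}_j$ with $i\neq j$. Then $\beta_x^{(l)}\in\mathcal{B}_l^{l\circ i}$ while $\beta_y^{(l)}\in\mathcal{B}_l^{l\circ j}$, and since left multiplication by $l$ is injective we have $l\circ i\neq l\circ j$; so for every $l$, the diagonal choices $l=i$ and $l=j$ included, the two blocks lie in different parallel classes of $\mathcal{D}_l$ and meet in exactly $q^{d-2}$ points. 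Thus each class contributes $q^{d-2}(q-1)^2$ common neighbours, and
$$\lambda_2=m\,q^{d-2}(q-1)^2=q^{d-2}(q-1)(q^d-1).$$
Since $\Gamma$ has edges (and, for $d\ge 2$, is not complete), the two common-neighbour conditions are exactly what the definition demands, so $\Gamma$ is a divisible design graph with the stated parameters and canonical partition $\{\mathcal{P}_1,\dots,\mathcal{P}_m\}$.

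I do not anticipate a genuine obstacle: the argument is bookkeeping with the indices, organised around the single structural fact that left multiplication in $\mathcal{Q}$ is a bijection, used once to see that $i\circ l$ runs over all of $[m]$ and once to see that $l\circ i\neq l\circ j$ when $i\neq j$, together with the intersection and replication numbers of affine designs. The two spots that call for a little care are the verification that the adjacency rule is symmetric --- which is exactly what forces the conditions $\sigma_{i,j}=\sigma_{j,i}^{-1}$ and $\sigma_{i,i}=\mathrm{id}$ --- and the uniform handling of the diagonal classes $l=i$ (and $l=j$ in the mixed case), where $\sigma_{i,i}$ collapses to the identity and one must confirm that the block descriptions $\beta_x^{(l)}$ and the parallel/non-parallel alternative still behave as claimed.
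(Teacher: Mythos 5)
Your argument is correct: the class-by-class count of non-neighbours as blocks $\beta_x^{(l)}=\sigma_{i,l}\bigl(B_i^{i\circ l}(x)\bigr)$, the use of left-multiplication bijectivity once to make $i\circ l$ sweep $[m]$ (giving the replication number $(q^{d-1}-1)/(q-1)$ for $\lambda_1$) and once to force $l\circ i\neq l\circ j$ (giving the constant intersection $q^{d-2}$ for $\lambda_2$), and the symmetry check via $\sigma_{i,j}=\sigma_{j,i}^{-1}$ all hold up, and the arithmetic checks out. The paper itself does not reprove Theorem~\ref{Th1} but imports it from \cite[Construction 1]{VK}; your proof is essentially the standard argument given there, so there is nothing further to reconcile.
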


Other known examples of  affine designs are Hadamard $3$-designs with~$q = 2$. Due to the peculiarities of Construction~DDG, such designs cannot be used in it.

\section{Regular decomposition of strongly regular graphs}\label{S2}
 
Let the set of vertices of a regular graph $\Gamma$ admit a partition $V(\Gamma)= X_1\cup X_2$ such that $\Gamma_1$ on $X_1$ and $\Gamma_2$ on $X_2$ are the induced subgraphs of $\Gamma$. This
decomposition is called  regular if $\Gamma_1$ and $\Gamma_2$ are regular.
There is the incident structure $\mathcal{D}$ having block set  $X_1$ and point set $X_2$, and incidence given by adjacency in $\Gamma$. 

If $\Gamma$ is strongly regular and $\Gamma_1$ is regular, then some inequalities for the eigenvalues of $\Gamma$ was found by W.H.~Haemers and D.G.~Higman in~\cite[Theorem 2.2]{HH}. We need a special case of this theorem, noted by E.R.~van~Dam in \cite[Section~4.5.1]{vD}. 

Let $\Gamma$ be a primitive strongly
regular graph on $v$ vertices, with spectrum $\{k^1, r^f, s^g\}$. If $\Gamma_1$ on $X_1$ is a regular graph and $\Gamma_2$ on $X_2$ is a coclique  of size $vs/(s-k)$ (known as a Hoffman coclique), then the induced subgraph $V(\Gamma)\setminus C$ is a regular, connected graph with spectrum
$$\{(k + s)^1, r^{f-c+1}, (r + s)^{c-1}, s^{g-c}\}.$$
Moreover, if $c < g$, then the induced subgraph $V(\Gamma)\setminus C$ has four distinct eigenvalues. 

The spectrum of any divisible design graph with parameters $(v,k,\lambda_1,\lambda_2,m,n)$ can be calculated using its parameters as follows:
$$\{k^1,\sqrt{k-\lambda_1}^{f_1},-\sqrt{k-\lambda_1}^{f_2},
\sqrt{k^2-\lambda_2 v}^{g_1},-\sqrt{k^2-\lambda_2 v}^{g_2}\}.$$
Moreover, $f_1+f_2 =  m(n - 1)$ and $g_1+g_2 = m-1$ \cite[Lemma 2.1]{HKM}.

Therefore, the spectrum of any divisible design graph from Theorem~\ref{Th1} have four distinct eigenvalues  
$$\{q^d(q^{d-1} - 1),\,  q^{d-1},\,  0,\,  -q^{d-1}\},$$ with multiplicities 
$$\left\{1, \cfrac{(q^d -1)(q^d -q)}{2(q-1)}, \cfrac{q^d -q}{q-1}, \cfrac{(q^d -1)(q^d +q -2)}{2(q-1)}\right\}.$$

Note that the eigenvalue $0$ multiplicity is $(q^d -q)/(q-1)$.
This observation allows us to use any divisible design graph from Construction~DDG as $\Gamma_1$ and a coclique of size $c=(q^d -q)/(q-1)+1=(q^d -1)/(q-1)$ as $\Gamma_2$  to construct a new strongly regular graph.

\section{Construction of strongly regular graphs}\label{S3}

In this section, a construction of strongly regular graphs with
parameters $$\left((q^{2d}-1)/(q-1), q^{2d-1}, q^{2d-2}(q-1), q^{2d-2}(q-1)\right)$$ 
is given. These parameters are known as the parameters of the complement of $Sp(2d,q)$. 
\smallskip

Let $\Gamma^\ast$ be a divisible design graph with parameters $$v^\ast = q^d (q^d - 1)/(q-1),\quad k^\ast = q^{d-1}(q^d - 1),$$
$$\lambda_1^\ast = q^{d-1}(q^d - q^{d-1} - 1),\quad \lambda_2^\ast = q^{d-2}(q-1)(q^d - 1)$$ 
 on the vertex set $V^\ast$.
The canonical partition of $\Gamma^\ast$ consists of $m^\ast=(q^d - 1)/(q-1)$ classes which have the size $q^d$.
Let  $\mathcal{D}^\ast=(\mathcal{P}^\ast,\mathcal{B}^\ast)$ be a symmetric 
$2$-$((q^{d} -1)/(q-1), q^{d-1}, q^{d-2}(q-1))$ design. 
\smallskip

Let $\phi$ be an arbitrary bijection from the set of all canonical classes of $\Gamma^\ast$ to the set of blocks $\mathcal{B}^\ast$.
\medskip

{\bf Construction SRG1.}

Let $\Gamma$ be a graph defined as follows:
\begin{itemize}
    \item The vertex set of $\Gamma$ is $\displaystyle V = V^\ast\cup \mathcal{P}^\ast.$ 
   \item Two different vertices from $V^\ast$  are adjacent in $\Gamma$ if and only if they are adjacent in $\Gamma^\ast$.
The set $\mathcal{P}^\ast$ is a coclique in $\Gamma$. 
 A vertex $x$ in $V^\ast$ from any canonical class $\mathcal{P}_i$ is adjacent to a vertex $y$ in $\mathcal{P}^\ast$ if and only if $y$ belongs to the block $\phi(\mathcal{P}_i)$, where $i\in [m^\ast]$. 
   \end{itemize}
\medskip

\begin{theorem}\label{Th2}
If $\Gamma$ is a graph from Construction~SRG1, then  $\Gamma$ is a strongly regular graph with parameters  
$$((q^{2d}-1)/(q-1), q^{2d-1}, q^{2d-2}(q-1), q^{2d-2}(q-1)).$$ 
\end{theorem}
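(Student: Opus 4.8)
The plan is to verify the strong regularity of $\Gamma$ directly from the definition by checking regularity and the two common-neighbour conditions, exploiting the known combinatorial data of $\Gamma^\ast$ (its divisible design parameters, its canonical partition, and its spectrum) together with the parameters of the symmetric design $\mathcal{D}^\ast$. Throughout I write $N=(q^{2d}-1)/(q-1)$, so that $|V^\ast|=v^\ast=q^d m^\ast$ and $|\mathcal{P}^\ast|=m^\ast=(q^d-1)/(q-1)$, and one checks $v^\ast+m^\ast=N$. The target parameters are $(N,\,q^{2d-1},\,q^{2d-2}(q-1),\,q^{2d-2}(q-1))$; note $\lambda=\mu$ here, so $\Gamma$ will in fact have the ``$(n,k,\lambda,\lambda)$'' type of strongly regular graph (a so-called conference-like degenerate case only in the sense $\lambda=\mu$, not in eigenvalue multiplicities).

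First I would establish regularity. A vertex $y\in\mathcal{P}^\ast$ has neighbours only in $V^\ast$: it is adjacent to exactly those $x$ whose canonical class $\mathcal{P}_i$ satisfies $y\in\phi(\mathcal{P}_i)$. Since $\mathcal{D}^\ast$ is a symmetric $2$-design with replication number $r=k_{\mathcal{D}}=q^{d-1}$, the point $y$ lies on exactly $q^{d-1}$ blocks, hence in exactly $q^{d-1}$ canonical classes, each of size $q^d$; therefore $\deg_\Gamma(y)=q^{d-1}\cdot q^d=q^{2d-1}$, as required. For a vertex $x\in V^\ast$ lying in class $\mathcal{P}_i$, its neighbours in $V^\ast$ number $k^\ast=q^{d-1}(q^d-1)$, and its neighbours in $\mathcal{P}^\ast$ are exactly the points of the block $\phi(\mathcal{P}_i)$, of which there are $k_{\mathcal{D}}=q^{d-1}$; summing gives $q^{d-1}(q^d-1)+q^{d-1}=q^{2d-1}$. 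So $\Gamma$ is $q^{2d-1}$-regular.

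Next I would count common neighbours for the three kinds of vertex pairs. For two points $y,y'\in\mathcal{P}^\ast$: their common neighbours lie in $V^\ast$, and $x\in\mathcal{P}_i$ is adjacent to both iff $\phi(\mathcal{P}_i)$ contains both $y$ and $y'$; the number of blocks through two points is $\lambda_{\mathcal{D}}=q^{d-2}(q-1)$, and each contributes a whole class of $q^d$ vertices, giving $q^{d-2}(q-1)\cdot q^d=q^{2d-2}(q-1)$. For $x\in V^\ast$ (in class $\mathcal{P}_i$) and $y\in\mathcal{P}^\ast$: common neighbours split as those in $V^\ast$ and those in $\mathcal{P}^\ast$; the latter is empty since $\mathcal{P}^\ast$ is a coclique, so I must count $x'\in V^\ast$ adjacent to $x$ in $\Gamma^\ast$ and lying in a class $\mathcal{P}_j$ with $y\in\phi(\mathcal{P}_j)$. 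This requires grouping the $k^\ast$ neighbours of $x$ by canonical class and knowing how many neighbours $x$ has in each class $\mathcal{P}_j$ — this is where I expect the main work. One must extract from the divisible-design structure of $\Gamma^\ast$ (and from Construction~DDG, where the class $\mathcal{P}_i$ is an affine plane of order $q$ and the non-neighbours of $x$ in $\mathcal{P}_j$ form a single block $\sigma_{ij}(B_i^{i\circ j}(x))$ of size $q^{d-1}$) that $x$ has exactly $q^{d-1}$ neighbours in $\mathcal{P}_i$ itself (equivalently, $q^d-1-q^{d-1}$ — wait, the non-neighbours in the same class including $x$: in $\mathcal{P}_i$ one has $j=i$, $\sigma_{ii}=\mathrm{id}$, so the non-neighbours of $x$ are exactly $B_i^{i\circ i}(x)$, a block of size $q^{d-1}$ containing $x$, leaving $q^d-q^{d-1}$ neighbours of $x$ inside $\mathcal{P}_i$) and exactly $q^d-q^{d-1}$ neighbours in each other class $\mathcal{P}_j$, $j\ne i$. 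Granting this uniform count: among the $q^{d-1}$ classes $\mathcal{P}_j$ with $y\in\phi(\mathcal{P}_j)$, one of them might be $\mathcal{P}_i$ (if $y\in\phi(\mathcal{P}_i)$, i.e. $x\sim y$) or not. One checks that whether or not $\mathcal{P}_i$ is among them, the total $\sum_j(\text{neighbours of }x\text{ in }\mathcal{P}_j)$ over the relevant $q^{d-1}$ classes comes out to $q^{d-1}(q^d-q^{d-1})=q^{2d-2}(q-1)$ when $x\not\sim y$, and when $x\sim y$ one of the $q^{d-1}$ classes is $\mathcal{P}_i$ contributing $q^d-q^{d-1}$ the same as every other, so the sum is again $q^{2d-2}(q-1)$; in both cases we land on the target value, as it must since $\lambda=\mu$.

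Finally, for two vertices $x,x'\in V^\ast$: common neighbours split into those in $V^\ast$, which is $\lambda_1^\ast$ or $\lambda_2^\ast$ according as $x,x'$ are in the same canonical class or not, plus those in $\mathcal{P}^\ast$, which are the points lying in both $\phi(\mathcal{P}_i)$ and $\phi(\mathcal{P}_j)$ — that is $k_{\mathcal{D}}=q^{d-1}$ points if $\mathcal{P}_i=\mathcal{P}_j$ and $\lambda_{\mathcal{D}}=q^{d-2}(q-1)$ points if $\mathcal{P}_i\ne\mathcal{P}_j$. Then I would verify the two arithmetic identities
$$
\lambda_1^\ast + q^{d-1} = q^{d-1}(q^d-q^{d-1}-1)+q^{d-1}=q^{2d-2}(q-1),
$$
$$
\lambda_2^\ast + q^{d-2}(q-1) = q^{d-2}(q-1)(q^d-1)+q^{d-2}(q-1)=q^{2d-2}(q-1),
$$
both of which hold, so every pair of distinct vertices of $\Gamma$ has exactly $q^{2d-2}(q-1)$ common neighbours regardless of adjacency. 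Since $\lambda=\mu$ makes the adjacency/non-adjacency distinction vacuous, the only genuine content is the regularity and the uniform common-neighbour count, and once the per-class neighbour distribution of a vertex of $\Gamma^\ast$ (the step flagged above) is pinned down, the rest is the routine bookkeeping displayed here. I would therefore present the per-class count as a short preliminary lemma about Construction~DDG (or cite it from~\cite{VK}), then assemble the three cases. The one subtlety to watch is that $\phi$ being an \emph{arbitrary} bijection means nothing about compatibility between the affine-design structure inside classes and the symmetric design $\mathcal{D}^\ast$ is needed — the counts only ever use \emph{how many} classes a point/point-pair hits, never \emph{which} ones — so no coherence hypothesis on $\phi$ is required, which is worth remarking on.
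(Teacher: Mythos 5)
Your proposal is correct and follows essentially the same route as the paper's own proof: the same vertex count, the same two regularity checks, and the same four-case analysis of common neighbours (same class, different classes, both in $\mathcal{P}^\ast$, mixed), landing on $q^{2d-2}(q-1)$ each time via identical arithmetic. The only point where you go beyond the paper is that you explicitly justify the per-class neighbour count $q^d-q^{d-1}$ used in the mixed case by appealing to the block structure of Construction~DDG (the paper simply asserts ``$q^d-q^{d-1}$ multiplied by $q^{d-1}$'' in its case (4)), which is a reasonable addition since that equitability of the canonical partition is the one non-bookkeeping input to the argument.
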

\begin{proof}
Let $\Gamma$ be a graph from Construction~SRG1. The number of vertices in $\Gamma$ is equal to $$q^d (q^d -1)/(q-1) + (q^d -1)/(q-1) = (q^{2d}-1)/(q-1).$$
If $x$ is a vertex of $\Gamma$ from $\mathcal{P}_i$, where $i\in [m^\ast]$, then there are 
$$k^\ast + |\phi(\mathcal{P}_i)|=q^{d-1}(q^d - 1) + q^{d-1} = q^{2d-1}$$ vertices in $\Gamma(x)$. 
If $y$ is a vertex of $\Gamma$ from $\mathcal{P^\ast}$, then
$\Gamma(y)$ consists of $q^{d-1}$ canonical classes of size $q^d$ from $V \setminus \mathcal{P^\ast}$. 
Hence, $\Gamma$ is a regular graph of degree~$q^{2d-1}.$

Let $x$ and $y$ be two different vertices from $\Gamma$ and $\lambda(x,y)$ be the number of their common neighbours in $\Gamma$. This number depends only on the relative position of $x$ and $y$ in $\Gamma^\ast$ and $V^\ast$. Consider all the possibilities of placing $x$ and $y$ in $\Gamma$.
\begin{itemize}
    \item[(1)] If  $x$ and $y$ belong to the same canonical class of $\Gamma^\ast$,
 then $x$ and $y$ have $\lambda_1^\ast =q^{d-1}(q^d - q^{d-1} - 1)$ common neighbours in $V^\ast$ and $q^{d-1}$ common neighbours in~$\mathcal{P}^\ast$. 
 Thus, $$\lambda(x,y)=q^{d-1}(q^d - q^{d-1} - 1)+q^{d-1}=q^{2d-2}(q-1).$$ 
    \item[(2)]  If  $x$ and $y$ belong to different canonical classes of $\Gamma^\ast$, then $x$ and $y$ have $\lambda_2^\ast =q^{d-2}(q-1)(q^d - 1)$ common neighbours in $V^\ast$ and $q^{d-2}(q-1)$ common neighbours in~$\mathcal{P}^\ast$. 
Thus, $$\lambda(x,y)=q^{d-2}(q-1)(q^d - 1)+q^{d-2}(q-1)=q^{2d-2}(q-1).$$ 
    \item[(3)] If  $x$ and $y$ belong to  $\mathcal{P}^\ast$,  then $x$ and $y$ have  $q^{d-2}(q-1)$  times $q^d$ common neighbours in $V^\ast$, and they have no  common neighbours in~$\mathcal{P}^\ast$. 
Thus, $$\lambda(x,y)=q^d q^{d-2}(q-1)=q^{2d-2}(q-1).$$ 
    \item[(4)]  If $x\in V^\ast$ and $y\in \mathcal{P}^\ast$, then  $x$ and $y$ have $q^d - q^{d-1}$ multiplied by $q^{d-1}$ common neighbours in $V^\ast$ and have no  common neighbours in~$\mathcal{P}^\ast$. 
Thus, $$\lambda(x,y)=q^{d-1}q^d - q^{d-1}=q^{2d-2}(q-1).$$ 
\end{itemize}

Therefore, in all cases the number of common neighbours of $x$ and $y$ in $\Gamma$ equals 
$q^{2d-2}(q-1)$. Hence, $\Gamma$ is a strongly regular graph with parameters 
$$((q^{2d}-1)/(q-1), q^{2d-1}, q^{2d-2}(q-1), q^{2d-2}(q-1)).$$ This completes the proof of Theorem~\ref{Th2}.
\end{proof} 

\section{Construction from Hoffman coloring}\label{S4} 

Let $\Gamma$ be a strongly regular graph of valency $k$ and smallest eigenvalue $s$. By P.~Delsarte~\cite{D} clique in $\Gamma$ of size $1 - k/s$ is called a {\it Delsarte clique}. 

A natural question arises: is it possible to construct a strongly regular graph using a Delsarte clique instead of a coclique on $\mathcal{P}^\ast$ in Construction~SRG1. The answer is yes, but we must use a different divisible design graph. 

A.J.~Hoffman~\cite{AJH} proved that the chromatic number of any graph with largest eigenvalue $k$ and
smallest eigenvalue $s$ is at least $vs/(s-k)$. The coloring of a graph meeting this bound is called a Hoffman coloring. D.~Panasenko and L.~Shalaginov~\cite{PS} found divisible design graphs using Hoffman coloring of strongly regular graphs with parameters $(v, k, \mu + 2, \mu)$. 

Suppose $\Delta$ is a strongly regular graph with parameters $(v, k, \mu + 2, \mu)$ and has a Hoffman coloring with Hoffman cocliques of size $n$. Let $A$ be the adjacency matrix of $\Delta$.
Let $K = K_{(m, n)}$ and $I = I_v$. By~\cite[Construction 16]{PS} $A + K - I$ is the adjacency matrix of a divisible design graph with parameters $$(mn, k + n - 1, n + \mu - 2, 
2k/(m-1) + \mu, m, n).$$ 

Let $\Gamma^\ast$ be a divisible design graph with parameters $(v^\ast, k^\ast, \lambda_1^\ast, \lambda_2^\ast, m^\ast,n^\ast)$ on the vertex set $V^\ast$  from \cite[Construction 16]{PS}.
Let  $\mathcal{D}^\ast=(\mathcal{P}^\ast,\mathcal{B}^\ast)$ be a symmetric $2$-$(v^\infty, k^\infty, \lambda^\infty)$ design. 
\smallskip

Let $v^\infty =m^\ast$ and $\phi$ be an arbitrary bijection from the set of canonical classes of $\Gamma^\ast$ to the set of blocks $\mathcal{B}^\ast$.
\medskip

{\bf Construction SRG2.}
Let $\Gamma$ be a graph defined as follows:
\begin{itemize}
    \item The vertex set of $\Gamma$ is $\displaystyle V = V^\ast\cup \mathcal{P}^\ast.$ 
   \item Two different vertices from $V^\ast$  are adjacent in $\Gamma$ if and only if they are adjacent in $\Gamma^\ast$.\medskip
The set $\mathcal{P}^\ast$ is a clique in $\Gamma$. 
 A vertex $x$ in $V^\ast$ from the canonical class $\mathcal{P}_i$, where $i\in [m^\ast]$, is adjacent to a vertex $y$ in $\mathcal{P}^\ast$ if and only if $y$ belongs to the block $\phi(\mathcal{P}_i)$. 
   \end{itemize}
\medskip

If the following equalities hold $$\sqrt{k - \mu + 1} + n + \mu - 2 = m-2+n\lambda^\infty = 2k/(m-1) + \mu +  \lambda^\infty,$$ then from Construction~SRG2 we have a strongly regular graph with parameters $$(m(n+1), k + \sqrt{k - \mu + 1} + n - 1, \sqrt{k - \mu + 1} + n + \mu - 2, \sqrt{k - \mu + 1} + n + \mu - 2).$$

\section{Small examples}\label{S5}

\subsection{Examples from Construction~SRG1}

If $q=2$ and $d=2$, then, by Theorem~\ref{Th1}, a divisible design graph has parameters $(12,6,2,3,3,4)$.
The line graph of the octahedron is a unique graph with such parameters.
By Theorem~\ref{Th2}, we have the triangular graph $T(6)$ which is a strongly regular graph with parameters $(15,8,4,4)$.

 D.~Panasenko and L.~Shalaginov~\cite{PS} found all divisible design graphs up to 39 vertices by direct computer calculations, except for the three tuples of parameters: $(32,15,6,7,4,8)$, $(32,17,8,9,4,8)$, $(36,24,15,16,4,9)$. These cases proved to be very difficult to handle.
 
If $q=3$ and $d=2$, then, by Theorem~\ref{Th1}, we have parameters $(36,24,15,16,4,9)$. 
By Theorem~\ref{Th2}, from any divisible design graph with parameters $(36,24,15,16,4,9)$ and a $2$-$(4,3,2)$ design we can construct a strongly regular graph with parameters $(40,27,18,18)$. The complement of this graph has parameters $(40,12,2,4)$. 
All strongly regular  graphs with parameters $(40,12,2,4)$ were found by E.~Spence~\cite{ES}. 
There are exactly $28$ non-isomorphic strongly regular graphs with parameters $(40,12,2,4)$.
Only the first one of them does not have $4$-cliques. If strongly regular graph $\Gamma$
 with parameters $(40,12,2,4)$ has a regular $4$-clique $C$, then the induced subgraph on $V(\Gamma)\setminus C$ in the complement of  $\Gamma$ is a divisible design graph  with parameters $(36,24,15,16,4,9)$. It turns out that there are $87$  non-isomorphic divisible design graphs  with such parameters. The adjacency matrices of all these graphs were calculated by D.~Panasenko by checking the graphs found by E.~Spence~\cite{ES}.Them are available on the web page \url{ http://alg.imm.uran.ru/dezagraphs/ddgtab.html}.

\subsection{Examples from Construction~SRG2}

There are 56 divisible design graphs with parameters $(28, 15, 6, 8, 7, 4)$ and spectrum $\{15^1, 3^7, 1^6, -3^{14}\}$ from either the Triangular graph $T(8)$ or one of the Chang graphs \cite[Construction 16]{PS}.  Construction~SRG2 gives us strongly regular graphs with parameters $(35,18,9,9)$. Remark that strongly regular graphs with parameters $(35,18,9,9)$ were complete enumerated by B.D.~McKay and E.~Spence in~\cite{MKS}. There are 3854 strongly regular graphs with parameters $(35,18,9,9)$. 499 of them can be obtained from Construction~SRG2.

\section{The number of isomorphism
classes of graphs from Construction~DDG and Construction~SRG1}\label{S6} 

In this section we discuss
how many non-isomorphic divisible design graphs and strongly regular graphs we can get by Construction~DDG and Construction~SRG1, respectively, for given $q$ and $d$?

Using the same arguments as M.~Muzychuk in \cite[Proposition 3.5]{MM} we obtain a lower bound for the number of non-isomorphic
divisible design graphs from Construction~DDG as follows $$\frac{(q!)^m}{(q^d m^2)^{q^d m}(q^{d+1} m)^{m-1}},$$ where $m=(q^d -1)/(q-1)$.
Let $D_1$ be the number of non-isomorphic symmetric  $2$-$\left(\frac{q^d -1}{q-1},\frac{q^{d-1} -1}{q-1},\frac{q^{d-2} -1}{q-1}\right)$ designs. 
If $\Gamma^\ast$ is a divisible design graph with parameters from Construction~DDG, then there are at least $D_1$ pairwise non-isomorphic strongly regular graphs, by Construction~SRG1, with $\Gamma^\ast$ as an induced subgraph. 
To obtain the number of isomorphism classes of strongly regular graphs from Construction~SRG1, we need to estimate the number of Delsarte cliques in a graph with the symplectic graph  parameters.
For $q\gg 1$, $d\gg 1$ this number is at most $D_2=q^{2d-1}(q^{d-2})^{q^{d-2}}$.
Hence, the number of isomorphism classes of strongly regular graphs from Construction~SRG1 is at least
$$\frac{D_1(q! )^m}{D_2(q^d m^2)^{q^d m}(q^{d+1} m)^{m-1}}.$$

\section{Open questions}\label{S7}
\begin{question}
 Is it possible to obtain all non-isomorphic divisible design graphs with parameters  
$$v = q^d (q^d - 1)/(q-1), k = q^{d-1}(q^d - 1),$$
$$\lambda_1 = q^{d-1}(q^d - q^{d-1} - 1), \lambda_2 = q^{d-2}(q-1)(q^d - 1),$$ $$m = (q^d - 1)/(q-1),\quad n = q^d$$ from Construction~DDG for given $q$ and $d$?
\end{question}

A similar question for strongly regular graphs from Construction~SRG1 has a “no” answer. There is one strongly regular graph with parameters $(40, 12, 2, 4)$ which we cannot  obtain by Construction~SRG1.

\begin{question}
Is it true that among all graphs
with the same parameters as the symplectic graph, the symplectic graph contains the maximum possible number of Delsarte cliques?
\end{question}
If $d=2$, then the answer is "yes"  since the symplectic graph $Sp(4,3)$ is one of two generalized quadrangles  $GQ(3,3)$.

\subsection*{Acknowledgements}

The author is very grateful to Alexander~Gavrilyuk and Misha~Muzychuk for  their valuable remarks which improve the paper.

\end{document}